\newtheorem{myproposition}{Proposition}[section]
\newtheorem*{mytheorem}{Theorem 1}
\newtheorem{mycorollary}[myproposition]{Corollary}
\newtheorem*{myremark}{Remark}
\newcounter{rot}
\title[Total Vertex Irregularity Strength of Forests]
{Total Vertex Irregularity Strength of Forests}
\author[M. Anholcer]{Marcin Anholcer}
\curraddr[M. Anholcer]{Pozna\'n University of Economics\\ Faculty of Informatics and Electronic Economy\\ Pozna\'n, Poland}
\email{m.anholcer@ue.poznan.pl}
\author[M. Karo\'nski]{Micha{\l} Karo\'nski}
\curraddr[M. Karo\'nski]{Adam Mickiewicz University\\ Faculty of Mathematics and
 Computer Science\\ Pozna\'n, Poland\\ and Emory University\\
Department of Mathematics and Computer Science\\
Atlanta, GA, USA}
\email{karonski@amu.edu.pl}
\author[F. Pfender]{Florian Pfender}
\curraddr[F. Pfender]{Universit\"at Rostock\\ Institut f\"ur Mathematik\\ Rostock, Germany}
\email{Florian.Pfender@uni-rostock.de}
\keywords {irregular graph labelings}
\subjclass {05C78, (05C15)}
\begin{document}

\begin{abstract}
We investigate a graph parameter called the \textit{total vertex irregularity strength} ($tvs(G)$), i.e. the minimal $s$  such that there is a labeling $w: E(G)\cup V(G)\rightarrow \{1,2,\dots,s\}$ of the edges and vertices of $G$ giving distinct weighted degrees $wt_G(v):=w(v)+\sum_{v\in e \in E(G)}w(e)$ for every pair of vertices of $G$. We prove that $tvs(F)=\lceil (n_1+1)/2 \rceil$ for every forest $F$ with no  vertices of degree $2$ and no isolated vertices, where $n_1$ is the number of pendant vertices in $F$. Stronger results for trees were recently proved by Nurdin et al.
\end{abstract}

\maketitle
\section{Introduction}
Let us consider the simple undirected graph $G=(V(G),E(G))$ without loops, without isolated edges and with at most one isolated vertex. We assign a label (a natural positive number) to every edge (denoted by $w(e)$ for all $e \in E(G)$) and to every vertex (denoted by $w(v)$ for all $v \in V(G)$). We will refer to such a labeling as a \textit{total weighting} of $G$. For every vertex $v \in V(G)$, we define its \textit{weighted degree} as
\[
wt_G(v)=\sum_{e\ni v}w(e)+w(v).
\]

We call the labeling $w$ \textit{irregular} if for each pair of vertices, their weighted degrees are distinct. In~\cite{ref_BacJenMilRya1}, a new graph parameter called \textit{total vertex irregularity strength} ($tvs(G)$) was defined as   the smallest integer $s$ such that there exists a total weighting of $G$ with integers $\{1,2,\dots,s\}$ that is irregular. This parameter is similar to the irregularity strength of $G$ ($s(G)$), introduced in \cite{ref_ChaJacLehOelRuiSab1} (see also  \cite{ref_Tog1}, \cite{ref_FauLeh1}, \cite{ref_FriGouKar1} and \cite{ref_Nie1}), where only weights on the edges are allowed.

In~\cite{ref_BacJenMilRya1}, several bounds and exact values of $tvs(G)$ were established for different types of graphs. In particular, the authors proved that for every graph $G$ with $n$ vertices and $m$ edges, the following bounds  hold.
\begin{equation}\label{Jendrol_bound1}
\left\lceil \tfrac{n+\delta(G)}{\Delta(G)+1} \right\rceil \leq tvs(G) \leq n+\Delta(G)-2\delta(G)+1,
\end{equation}
where $\Delta (G)$ and $\delta (G)$ are the maximum and the minimum degree of $G$, respectively.

Recently, a much stronger upper bound on  $tvs(G)$ has been established in \cite{ref_AnhKalPrz1}. Namely, for every graph $G$ with $\delta(G)>0$,
\[
tvs(G)\leq \left\lceil \tfrac{3n}{\delta} \right\rceil+1.
\]

 One should also mention that in \cite{ref_BacJenMilRya1},  exact values of $tvs(G)$ for stars, cliques and prisms are given. Furthermore it is shown that for every tree $T$ without vertices of degree two, the following bounds hold.
\begin{equation}\label{Jendrol_bound4}
\left\lceil{\tfrac{n_1+1}{2}}\right\rceil\leq tvs(T) \leq n_1,
\end{equation}
where $n_1$ is the number of pendant vertices of $T$.

Our main result stated below shows that the trivial  lower bound in (\ref{Jendrol_bound4}) is the true value of $tvs(T)$. Note the easy observation that for trees with less than $4$ vertices, the equality $tvs(T)=\left\lceil{\frac{n_1+1}{2}}\right\rceil$ holds.

\begin{mytheorem}\label{main_thm}
For every forest $F$ with $n_1$ vertices of degree one and with no vertices of degree two and no isolated vertices,
\[
tvs(F)=\left\lceil {\tfrac{n_1+1}{2}} \right\rceil.
\]
\end{mytheorem}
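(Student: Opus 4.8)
The lower bound $tvs(F)\ge\lceil(n_1+1)/2\rceil$ is immediate from the general bound \eqref{Jendrol_bound1} (or from a direct counting argument: the $n_1$ pendant vertices all have weighted degree between $2$ and $2s$, so $n_1\le 2s-1$). The whole work lies in the matching upper bound, and the plan is to exhibit, for $s=\lceil(n_1+1)/2\rceil$, an explicit irregular total weighting with labels in $\{1,\dots,s\}$. It suffices to treat a single tree $T$ and then handle a forest by processing its components and shifting the ranges of achieved weighted degrees so that no two components collide; since the vertices of degree $\ge 3$ contribute a separate "band" of weighted degrees one can arrange the components consecutively. So from now on I focus on one tree $T$ with $n_1$ leaves, no vertex of degree $2$, and $n_1\ge 3$.

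The key structural idea is to root $T$ at a vertex and build the labeling by a bottom-up sweep, assigning to each vertex a \emph{target weighted degree} and then choosing the vertex label and the label of the edge to its parent to realize that target, given that all labels of edges to children have already been fixed. The leaves should receive the small weighted degrees: order the leaves $\ell_1,\dots,\ell_{n_1}$ and aim for $wt(\ell_i)$ roughly $i+1$, which is feasible with two labels each in $\{1,\dots,s\}$ because $i+1\le n_1+1\le 2s$. The internal vertices, having degree $d\ge 3$, should receive larger weighted degrees; an internal vertex of degree $d$ has weighted degree between $d+1$ and $(d+1)s$, and one wants to pack the internal targets into an interval disjoint from the leaf targets and from each other. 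The natural bookkeeping is to process internal vertices in nonincreasing order of depth (or by a careful DFS order) so that when we reach a vertex all but one of its incident edges are already labeled, leaving exactly two free labels (the vertex's own label and the up-edge) with which to hit the prescribed target; the target is chosen in a running counter, and one must check that the required target always lies in the achievable window $[\,(\text{already-committed sum})+2,\ (\text{already-committed sum})+2s\,]$.

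The main obstacle — and the place where the degree-$2$ hypothesis and the precise value of $s$ are used — is verifying this feasibility window at every vertex simultaneously, i.e.\ that the greedily assigned targets never run ahead of, or fall behind, what the two free labels can produce. Roughly, each internal vertex of degree $d$ "consumes" $d+1$ units of head-room but its children have already contributed large partial sums, and one needs an inequality of the form: the number of vertices whose target must lie below a given threshold is at most what $2s$ labels allow, which reduces to the counting identity $\sum_v (d(v)-?)$ controlled by $n_1$ via $\sum (d(v)-2) = n_1 - 2$ for a tree (here the absence of degree-$2$ vertices makes every internal term at least $1$, which is what keeps the internal targets spread out enough). I expect the proof to be organized as: (1) reduce forest to tree; (2) fix the rooting and the leaf/internal ordering; (3) define the target counter and the two-label realization rule; (4) prove by induction up the tree that the feasibility window is never violated, this being the crux; (5) conclude the weighted degrees are all distinct because leaves occupy one interval of integers and internal vertices another, each injectively.
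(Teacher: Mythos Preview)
Your proposal is an outline rather than a proof: the step you yourself call ``the crux''---verifying that the feasibility window is never violated in the bottom-up sweep---is not carried out, only announced. Two further difficulties would have to be overcome. First, the forest-to-tree reduction does not work as stated: proving the theorem for a single tree $T$ uses $s_T=\lceil(n_1(T)+1)/2\rceil$, but a forest needs the global $s=\lceil(n_1(F)+1)/2\rceil$; ``shifting'' components means the leaf targets in later components no longer start at $2$, so what you would actually need is a stronger per-tree statement (arbitrary $s\ge s_T$ and arbitrary prescribed block of targets), and $K_2$-components (permitted by the hypotheses) fall outside ``a single tree with $n_1\ge 3$''. Second, at the root there is no up-edge, so only one label is free, not two, and the target must then be hit with a single value in $\{1,\dots,s\}$; this special case is not addressed and is where such sweeps typically require extra care.

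For comparison, the paper's construction is quite different and not recursive. It sets every non-pendant edge to $s$ at the outset, labels pendant edges from the alternating list $(1,s-1,2,s-2,\ldots)$ with about half of them equal to $s$, and then chooses vertex labels by partitioning vertices into classes $C_{kj}$ (degree $k$, exactly $j$ pendant neighbours) and filling the weight ranges $[2,2s]$, $[2s+2,3s]$, $[3s+1,4s]$, $[4s+1,\infty)$ one at a time, with the structural inequality $2n_{30}+n_{43}+2n_{44}\le n_1-3$ doing the counting work. No rooting is needed, and the whole forest (including $K_2$ components) is handled directly.
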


The proof of Theorem~\ref{main_thm} is given in the next section.

\begin{myremark}
Recently, stronger results for trees were proved by Nurdin et al. (\cite{ref_Nur}). However we decided to publish our paper for two reasons. Firstly, we consider more general case of forests, not only trees. Secondly, we use different proof technique. 
\end{myremark}

\section{Proof of  Theorem~\ref{main_thm}}\label{lab_sec_thm}

Let us consider a forest $F$. Denote by  $V_i$ the set of vertices of degree $i$, and let $n_i=|V_i|$. So $V_1$ is the set of pendant vertices and we call edges incident to  pendant vertices  {\it pendant edges}.
Denote by $C_{ij}$ the set of  its vertices of degree
$i$ with exactly $j$ pendant neighbors, where $i,j \geq 0$, and let $n_{ij}=|C_{ij}|$.

 Assume that in a total weighting of  $F$ we use labels (weights) from the set $\{1,2,\dots,s\}$. Then, the lowest and the highest weighted degree of a pendant vertex $v\in V_1$ can be $2$ and $2s$, respectively. Since there are $n_1$ such vertices, and each vertex has to have different total weight, the lower bound of (\ref{Jendrol_bound4}) trivially follows and extends to all forests, i.e.,
\begin{equation}\label{forest_lower_bound}
tvs(F) \geq \left\lceil{\tfrac{n_1+1}{2}}\right\rceil.
\end{equation}
To prove Theorem \ref{main_thm}, it is sufficient to construct an irregular labeling of $F$ using  elements from the set $\{1,2,\dots,s\}$ only, where $s:=\left\lceil{\tfrac{n_1+1}{2}}\right\rceil$.

We will often use the well-known fact that in every tree $T$ with maximum degree $\Delta\ge 1$ the numbers $n_j(T)$ of vertices of degree $j$ satisfy the equation
\begin{equation}
n_1(T)=2+\sum_{j=3}^{\Delta}{(j-2)n_j(T)}.\label{fact_degrees}
\end{equation}
Further, for forests $F$ with $n\ge 3$ and $n_2\le 1$, we have
\[
2n_{30}+n_{43}+2n_{44}\le n_1-2,
\]
with equality only for $K_{1,4}$ and $P_3$. This can be seen as follows. For the sake of contradiction, assume that $F$ is a minimal counter example to the inequality. Then $n_{43}=0$ as otherwise we could delete a pendant vertex adjacent to a vertex of class $C_{43}$ and receive a smaller counter example. Further, $n_{44}=0$ as otherwise we can delete a vertex of $C_{44}$ and its neighbors and either receive a smaller counterexample, or $F$ itself is a $K_{1,4}$, which is not a counterexample.
Now delete all pendant vertices from $F$ to construct a forest $F'$ with $n_3'\ge n_{30}$ and $n'=n-n_1$ vertices. Then
\[
2n_{30}\le 2n_3'\le_{\eqref{fact_degrees}} n'-2=n-n_1-2\le_{\eqref{fact_degrees}} n_1+n_2-4\le n_1-3.
\]
As Theorem~\ref{main_thm} is easily verified for $K_{1,4}$ and $P_3$, we may later work with the inequality
\begin{equation}\label{fact2_degrees}
2n_{30}+n_{43}+2n_{44}\le n_1-3.
\end{equation}

Label all non-pendant edges with $s$. Next, we will label the pendant edges.
First, label half the isolated edges (rounded down) with $1$ and the remaining isolated edges with $s$.
Let now $v\in C_{kj}$ for some $k\ge 3$ and $j\ge 1$, and we will label the incident pendant edges. Order the values $\{ 1,\ldots,s-1\}$ as a list $S=(1,s-1,2,s-2,3,s-3,\ldots)$.
\begin{enumerate}
 \item If $j$ is even, label $j/2$ pendant edges with $s$.
 \item If $j$ is odd, label $(j-1)/2$ (variant 1) or $(j+1)/2$ (variant 2) pendant edges with $s$.
 \item The remaining pendant $2i+\delta$ ($0\le \delta\le 1$) edges are labeled from $S$, where we use the first $2i$ and the last $\delta$ values in $S$, which have not previously been used on non-isolated edges. 
\end{enumerate}
During the process, choose variant $1$ and variant $2$, so that the number of pendant edges labeled $s$ is maximized but at most $s$, so there are $s-2$ or $s-1$ pendant edges labeled with a number less than $s$.

For notation, we write $C_{kj}^i$ for the vertices of variant $i$ in $C_{kj}$, and $n_{kj}^i$ for their number.

Note that in this labeling, regardless of the labels in $\{ 1,\ldots,s\}$ we give to the vertices themselves, 
vertices in $C_1$ have total weights between $2$ and $2s$, and all other vertices have total weights of at least $2s+2$.
%
Label every pendant vertex incident to a non-isolated edge labeled with a number less than $s$ with $1$. This guarantees that the total weights of all these vertices are different. The remaining vertices in $C_1$ can now be weighted greedily one-by-one.

For the weight range $2s+2\le wt(v)\le 3s$, only vertices in $C_{31}^1\cup C_{32}\cup C_{33}$ play a role. Vertices $v\in C_{33}^2$ have total weight $2s+w(v)$, all others have weight $2s+w(e)+w(v)$, where $e$ is a pendant edge with label other than $s$ incident to $v$.
We can set $w(v)=1$ for at most $s-1-n_{33}^2$ vertices $v\in (C_{31}^1\cup C_{32}\cup C_{33}^1)$, giving them all different total weights in this weight range, and greedily choose weights for the vertices in $C_{33}^2$ to fill out the remaining weights. Note that $n_{33}^2\le \frac{n_{33}+1}{2}\le \frac{n_1+3}{6}\le \frac{s+1}{3}$,  so this is possible.

For the weight range $3s+1\le wt(v)\le 4s$, only vertices in
\[
 C_{3}\cup C_{41}^1\cup C_{42}\cup C_{43}\cup C_{44}\cup C_{55}^1
\]
 play a role.
Label all vertices $v\in C_{41}^1\cup C_{42}\cup C_{43}^2\cup C_{55}^1$ with $w(v)=s$,
giving them total weight $wt(v)=4s+w(e)$, and thus pairwise different weights outside the weight range we currently consider.
We give the remaining (at most) $\max\{ 1,n_{31}^1+n_{32}+n_{33}-s+1\}$ vertices from $C_{31}^1\cup C_{32}\cup C_{33}^1$ weight $w(v)=s$, giving them all different total weights $3s+w(e)$ in this range. Vertices $v\in C_{30}\cup C_{43}^1\cup C_{44}$ have total weight $3s+w(v)$, so one can greedily fit them into the remaining weights of the range, provided there is enough room. If
$n_{31}+n_{32}+n_{33}\ge s$, then
\begin{multline*}
 n_{30}+n_{43}^1+n_{44}+\max\{ 1,n_{31}^1+n_{32}+n_{33}-s+1\}\\
 \le n_3+n_4-s+1\le_{\eqref{fact_degrees}} n_1-s-1\le s-1.
\end{multline*}
If, on the other hand, $n_{31}+n_{32}+n_{33}\le s-1$, then
\begin{multline*}
n_{30}+n_{43}^1+n_{44}+\max\{ 1,n_{31}^1+n_{32}+n_{33}-s+1\}\\
= n_{30}+n_{43}^1+n_{44}+1\le_{\eqref{fact2_degrees}} \left\lfloor\tfrac{n_1}{2}\right\rfloor=s-1.
\end{multline*}
Thus, there is enough room to fit all the mentioned vertices.

For the weight range $wt(v)\ge 4s+1$, only vertices in $C_j,j\ge 4$ play a role. By~\eqref{fact_degrees}, there are at most $s$ such vertices. We have dealt with vertices in $C_{4}\setminus C_{40}$ already, so all remaining vertices have $wt(v)\ge 4s+w(v)$. Thus, we have at each vertex $s$ choices for the total weight, which is enough to allow us to greedily pick the values $w(v)$ to complete the irregular total weighting.
\qed

\section{Final remarks}

Note that in the proof of Theorem~\ref{main_thm}, the total weight $2s+1$ was not used. With this observation we can prove a slight generalization.
\begin{mytheorem}
 For every forest $F$ on $n$ vertices, with $n_0=0$ and $n_2\le 1$, we have
\[
tvs(F)=\left\lceil {\tfrac{n_1+1}{2}} \right\rceil.
\]
\end{mytheorem}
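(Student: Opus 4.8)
The plan is to reuse the construction from the proof of Theorem~\ref{main_thm} almost verbatim, exploiting the fact observed in the ``Final remarks'' section that the total weight $2s+1$ was never used by any vertex. The only way a forest $F$ with $n_0=0$ and $n_2\le 1$ fails to be covered by Theorem~\ref{main_thm} is that it may contain a single vertex of degree $2$; call it $u$. So first I would reduce to the case where $F$ has exactly one vertex of degree $2$ (if $n_2=0$ we are already done by Theorem~\ref{main_thm}, and the small exceptional cases $K_{1,4}$ and $P_3$ are checked by hand as before). I would also want to re-examine the degree-counting inequalities \eqref{fact_degrees} and \eqref{fact2_degrees}, since \eqref{fact_degrees} was stated for trees with $n_2$ not appearing; the derivation of \eqref{fact2_degrees} given in the text already explicitly allows $n_2\le 1$, so that inequality stands, and the only place an extra $+1$ can creep in is wherever a degree-$2$ vertex contributes.

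Next I would classify the degree-$2$ vertex $u$ by how many of its two neighbours are pendant: either $u$ has two pendant neighbours, one pendant neighbour, or none. In the first case $u$ is essentially an isolated-edge-like configuration (a path $P_3$ component or a $P_3$ hanging off nothing) and can be handled like an isolated edge: one of its pendant edges gets label $1$, the other gets label $s$, and $u$ itself plus its two leaves can be weighted so their total weights land in the low range $[2,2s]$ together with the other $C_1$ vertices, plus one slot near $2s+1$ or $2s+2$ for $u$. In the remaining cases, $u$ has at least one non-pendant edge, which is labelled $s$ by the global rule ``label all non-pendant edges with $s$''; then $wt(u)$ is at least $s+1+w(u)\ge s+2$ if $u$ has one pendant and one non-pendant edge, or at least $2s+w(u)\ge 2s+1$ if both edges of $u$ are non-pendant. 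The key point is that $u$'s available total-weight slot is precisely the value $2s+1$ (or, when $u$ has one pendant neighbour, a value in a lower part of the spectrum that we can reserve) which, by the Final Remarks observation, no other vertex of $F$ occupies.

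Concretely, I would run the entire labeling procedure of Section~\ref{lab_sec_thm} on $F$ treating $u$ as if it were a non-pendant internal vertex (so its non-pendant edges get $s$, and if it has a pendant edge we simply assign that pendant edge a label exactly as the procedure assigns pendant edges at a $C_{k1}$-type vertex, choosing the variant bookkeeping consistently). Since every inequality used in that proof bounds a quantity by $s-1$ with slack, and since the degree-$2$ vertex contributes to at most one of the ranges and only by one extra unit — precisely the unit that was left free because $2s+1$ was unused — all the room-counting estimates survive with at most cosmetic adjustments. I would then assign $w(u)$ last, greedily, using the guaranteed free slot: if both edges at $u$ are non-pendant, set $w(u)=1$ to get $wt(u)=2s+1$; if $u$ has a pendant edge labelled $\ell<s$, then $wt(u)=2s+\ell+w(u)$ lies in the range $2s+2\le wt(u)\le 3s$ where we have already shown there is spare capacity (the $C_{31}^1\cup C_{32}\cup C_{33}$ analysis has at least one unused slot), and we pick $w(u)$ to hit it.

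The main obstacle I expect is purely bookkeeping: verifying that inserting $u$ into exactly one weight range does not overflow that range, i.e. that the relevant inequality (for instance $n_{30}+n_{43}^1+n_{44}+\max\{1,\dots\}\le s-1$ in the range $[3s+1,4s]$, or the count in $[2s+2,3s]$) still holds after adding one. This comes down to re-deriving \eqref{fact_degrees}-type bounds for forests with one degree-$2$ vertex — which is routine since a degree-$2$ vertex contributes $0$ to the right-hand side of the handshake-style identity but $n_2=1$ adds only a single unit to $n$, and the text's own derivation of \eqref{fact2_degrees} already absorbed this. Once that single extra unit is tracked to the one freed slot at total weight $2s+1$, the rest of the argument is identical to the proof of Theorem~\ref{main_thm}, and together with the trivial lower bound \eqref{forest_lower_bound}, which holds for all forests, the equality follows.
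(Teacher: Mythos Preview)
Your plan is essentially the paper's own: rerun the construction of Theorem~\ref{main_thm} and park the single degree-$2$ vertex $u$ at the unused weight $2s+1$. The paper's proof does exactly this in one line, with the uniform observation that $u$ is always incident to an edge of label $s$ (a non-pendant edge if it has one, or a pendant edge labeled $s$ by the construction otherwise), so one can always choose $w(u)$ to make $wt(u)=2s+1$.

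Your case split is therefore heavier than needed, and in the sub-case where $u$ has one non-pendant and one pendant edge labeled $\ell<s$ you slip arithmetically: the correct weight is $wt(u)=s+\ell+w(u)$, not $2s+\ell+w(u)$, since $u$ has degree $2$. With the correct formula you do \emph{not} land automatically in $[2s+2,3s]$ (indeed if $\ell=1$ the maximum achievable is $2s+1$), so the detour through ``spare capacity in the $C_{31}^1\cup C_{32}\cup C_{33}$ range'' is both unnecessary and, as written, unsound. The fix is exactly the paper's move: set $w(u)=s+1-\ell$ and hit $2s+1$ directly, which also removes any need to revisit the overflow estimates in the higher ranges.
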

\begin{proof}
 The proof is the same as above, with one extra observations. Just note in the end, that since a vertex $v$ of degree $2$ is incident to an edge of label $s$ (either a non-pendant edge or a pendant edge with this label by the construction), we can choose $w(v)$ such that $wt(v)=2s+1$.
\end{proof}
As a consequence we have the following corollary.
\begin{mycorollary}\label{prop_binary_tree}
Let $T$ be a binary tree. Then $tvs(T)=\lceil\frac{n_1+1}{2}\rceil$.
\end{mycorollary}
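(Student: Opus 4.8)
The plan is to obtain Corollary~\ref{prop_binary_tree} directly from the generalization proved just above, so that the entire argument reduces to reading off the degree statistics of a binary tree. First I would recall the structural fact that in a binary tree $T$ every non-leaf has exactly two children; hence every internal vertex other than the root has a parent together with two children and so has degree $3$, while the root has two children and no parent and so has degree $2$. Consequently, as long as $T$ has at least two vertices, $n_0(T)=0$ and $n_2(T)\le 1$, the root being the only vertex that can have degree $2$.

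With these two observations in hand, I would apply the preceding theorem with $F=T$: its hypotheses are precisely $n_0=0$ and $n_2\le 1$, so it yields $tvs(T)=\lceil\tfrac{n_1+1}{2}\rceil$. It is worth noting where the single degree-$2$ vertex goes: in the labeling constructed in the proof of Theorem~\ref{main_thm} all non-pendant edges receive weight $s$ and the weighted degree $2s+1$ is never attained, so, exactly as in the remark preceding the statement, one chooses $w(r)$ for the root $r$ so that $wt(r)=2s+1$ (possible since $r$ is incident to an edge of weight $s$), which separates $r$ from every other vertex.

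It only remains to handle the degenerate cases with fewer than four vertices, namely $T\in\{K_2,P_3\}$ (the one-vertex tree being excluded because we need $n_0=0$); for these the equality $tvs(T)=\lceil\tfrac{n_1+1}{2}\rceil$ holds by the elementary observation recorded right after the statement of Theorem~\ref{main_thm}. I do not expect any genuine obstacle here: all the combinatorial work is already packaged in the preceding theorem, and the only point requiring (minor) care is the structural claim that a binary tree has at most one vertex of degree $2$ — this is exactly what makes the $n_2\le 1$ hypothesis usable and what separates binary trees from, for instance, long paths, for which the conclusion of the corollary is false.
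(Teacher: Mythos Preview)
Your proposal is correct and matches the paper's approach exactly: the corollary is stated as an immediate consequence of the generalized theorem allowing $n_2\le 1$, and your structural observation that in a binary tree only the root can have degree~$2$ is precisely what is needed to invoke that theorem. The paper gives no separate argument beyond ``as a consequence,'' so your write-up in fact supplies more detail than the original (including the handling of the root via the unused weight $2s+1$, which mirrors the remark preceding the corollary).
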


\end {document}